\documentclass{article}

\usepackage{tikz-cd}
\usepackage{comment}
\usepackage[all]{xy}
\usepackage[usenames,dvipsnames]{pstricks}
\usepackage{epsfig}
\usepackage{pst-grad} 
\usepackage{pst-plot} 
\usepackage{ebezier}
\usepackage{latexsym}\usepackage{amsfonts}\usepackage{amssymb}\usepackage{amsthm}
\usepackage{amsmath}\usepackage{newlfont}\usepackage{graphicx}\usepackage{doc}
\usepackage{mathrsfs}
\usepackage[
        colorlinks, citecolor=red,
        backref,
        pdfauthor={MDJ,JK},
        pdftitle={S1}
]{hyperref}

\newtheorem{thrm}{Theorem}

\newtheorem{cor}[thrm]{Corollary}

\newtheorem{rem}[thrm]{Remark}
\newtheorem{prop}[thrm]{Proposition}
\newtheorem{exam}[thrm]{Example}
\newtheorem*{theorem-non}{Theorem}

\def \Dj{\mbox{\raise0.3ex\hbox{-}\kern-0.4em D}}

\newcommand{\NW}{\mathrm{NW}}
\newcommand{\CR}{\mathrm{CR}}
\newcommand{\Fix}{\mathrm{Fix}}

\begin{document}

\title{A note on the non-wandering dynamics in the $n$-fold symmetric products}

\author{Ma\v{s}a \Dj ori\'c\\
Matemati\v{c}ki institut SANU\\
Knez Mihailova 36\\
11000 Beograd\\
Serbia\\
masha@mi.sanu.ac.rs\\
\href{https://orcid.org/0000-0003-1364-2339}{https://orcid.org/0000-0003-1364-2339}
}

\maketitle

\begin{abstract} 
We construct a homeomorphism on a compact metric space whose non-wandering set consists of a single fixed point, while the induced map $F_n(f)$ possesses infinitely many (non-fixed) non-wandering points. 
\end{abstract}

\medskip

{\it 2020 Mathematical  subject classification:} Primary 37B40, Secondary 54F16, 37A35 \\
{\it Keywords:} non-wandering set, chain recurrent set, hyperspace, $n$-fold symmetric product

\section{Introduction}

Every continuous map $f:X\to X$ on a compact metric space $X$ induces a continuous map (called the induced map) on the hyperspace $2^X$ of all nonempty closed subsets. Its subspace, namely the hyperspace $F_n(X)$ of all nonempty subsets with at most $n$ points (for all $n\in\mathbb{N}$), is of main interest in our paper. The dynamics of induced maps on hyperspaces, and, in particular, on the $n$-fold symmetric product $F_n(X)$, has been studied extensively from the viewpoint of the relation between the dynamics of a map and that of its induced map. Gómez-Rueda, Illanes and Méndez investigated how several properties are transferred between $f$ and $F_n(f)$ in \cite{RIM}. In particular, they showed that the naturally assumed connection fails for the non-wandering set. Their example demonstrates that a finite set of wandering points may become non-wandering in the induced hyperspace. In this note, we show that this phenomenon persists even when the dynamics of the base system is extremely simple. We construct a homeomorphism $f$ on a compact metric space whose non-wandering set consists of a single fixed point, while the induced map $F_n(f)$ possesses a non-fixed non-wandering point. Thus, even the condition $\NW(f)=\Fix(f)$, with $|\NW(f)|=1$ does not imply that $\NW(F_n(f))=\Fix(F_n(f))$. This example also highlights the essential difference between assumptions involving the non-wandering set and stronger hypotheses involving the chain recurrent set in the study of finite symmetric products. More precisely, the author showed in \cite{Dj}, that the following theorem is true:

\begin{thrm}\label{CR}
Let $X$ be a compact metric space and $f:X\to X$ a continuous map such that $\mathcal{CR}(f)=\mathrm{Fix}(f)=\mathrm{NW}(f)$ is a finite set. Then 
$$\mathrm{NW}(F_n(f))=\mathrm{Fix}(F_n(f))=\big\{\{x_1,\ldots,x_k\}\mid 1\leqslant k\leqslant n,\,x_j\in\mathrm{Fix}(f)\big\}.$$
\end{thrm}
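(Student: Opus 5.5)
The inclusion $\supseteq$ is immediate: if every $x_j\in\Fix(f)$, then $F_n(f)(\{x_1,\ldots,x_k\})=\{x_1,\ldots,x_k\}$. So every such set is fixed, and fixed points are non-wandering. Also $\Fix(F_n(f))\subseteq\NW(F_n(f))$ always holds. It therefore suffices to show $\NW(F_n(f))\subseteq\{\{x_1,\ldots,x_k\}\mid x_j\in\Fix(f)\}$. I will do this through the chain recurrent set. Since $\NW(g)\subseteq\CR(g)$ for every continuous map $g$, I aim to prove that every chain recurrent point $A$ of $F_n(f)$ consists of fixed points of $f$.

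\textbf{Step 1 (projecting chains).} Let $A=\{a_1,\ldots,a_k\}\in\CR(F_n(f))$, with the Hausdorff metric $d_H$ on $F_n(X)$. Suppose $A_0=A,A_1,\ldots,A_m=A$ is an $\varepsilon$-chain, so $d_H(f(A_i),A_{i+1})<\varepsilon$. Then for every $y\in A_{i+1}$ there is $x\in A_i$ with $d(f(x),y)<\varepsilon$. Hence, going backwards from any point of $A_m$, we get an $\varepsilon$-chain for $f$ ending at that point and starting in $A_0$. Each $A_i$ has at most $n$ points. Concatenating these backward selections from $a\in A$ gives a map $\phi:A\to A$ with $\varepsilon$-chains from $\phi(a)$ to $a$. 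Iterating $\phi$ on the finite set $A$, every $a$ is reached by an $\varepsilon$-chain from some $\phi$-periodic point $b$, and $b$ lies on an $\varepsilon$-chain from itself to itself.

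Letting $\varepsilon\to 0$ and using the finiteness of $A$, we can pass to subsequences where the combinatorics of $\phi$ is constant. This yields points $b\in A$ with $b\in\CR(f)=\Fix(f)$. Moreover, every $a\in A$ is \emph{chain reachable} from such a $b$, i.e.\ for every $\varepsilon$ there is an $\varepsilon$-chain from $b$ to $a$.

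\textbf{Step 2 (the main obstacle).} The obstacle is to rule out points $a\in A\setminus\Fix(f)$ that are chain-reachable from fixed points but are not themselves chain recurrent. Here I use the forward direction as well. Every $x\in A_i$ also has an $\varepsilon$-successor in $A_{i+1}$. So each $a\in A$ admits a forward $\varepsilon$-chain that returns to $A$ at time $m$. Iterating forward in the finite set $A$ gives a map $\psi:A\to A$ with $\varepsilon$-chains from $a$ to $\psi(a)$. Hence each $a$ chain-reaches some $\psi$-periodic point $c$, which is chain recurrent and thus fixed.

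Now fix $a\notin\Fix(f)$. Since $\CR(f)=\Fix(f)$ is finite, the chain components are singletons, and the chain relation restricted to fixed points is a partial order with no cycles between distinct points. Using the ordering on the finitely many fixed points, I will count multiplicities along the chain. Consider, for each $i$, how many elements of $A_i$ lie close to each fixed point, versus how many lie away from $\Fix(f)$. A point away from a small neighbourhood $U$ of $\Fix(f)$ must leave $X\setminus U$ within a bounded number $N$ of steps of any $\varepsilon$-chain, because there are no chain recurrent points there.

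The idea is a Lyapunov-function argument. By Conley's theorem there is a continuous $L:X\to\mathbb R$ that strictly decreases along orbits off $\CR(f)$ and takes distinct values on the distinct fixed points. One would like $\Lambda(B)=\sum_{x\in B}L(x)$ to serve as a Lyapunov function on $F_n(X)$, but sums are problematic when points of $B$ merge. Instead I would use the sorted vector of values $(L(x))_{x\in B}$ with lexicographic order, and show it cannot return to its starting value along small chains unless all points are fixed.

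\textbf{Step 3 (conclusion).} The careful part is handling points that split or merge under the Hausdorff chain, which is exactly where $\NW$ alone fails and $\CR$ is needed. Granting the Lyapunov monotonicity, $A$ being chain recurrent forces every $a\in A$ to lie in $\CR(f)=\Fix(f)$. This gives the required inclusion, and hence the stated equality.
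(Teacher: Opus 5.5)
\textbf{There is a genuine gap: the intermediate claim your proof rests on is false, and so is the statement itself for $n\geqslant 4$.} (The paper quotes this result from earlier work and gives no proof here, so there is no argument of the paper's to compare with.)

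Your reduction to the chain recurrent set is where the proof breaks. The claim that every $A\in\CR(F_n(f))$ consists of fixed points of $f$ is false once $n\geqslant 3$. So the Lyapunov monotonicity you grant in Step~3 cannot hold for any choice of $L$. Take $X=[0,1]$ and $f(x)=\sqrt{x}$; then $\CR(f)=\Fix(f)=\NW(f)=\{0,1\}$. For $0<a<1$ and any $\varepsilon>0$, the following is an $\varepsilon$-chain of $F_3(f)$ from $A=\{0,a,1\}$ back to $A$:
iterate exactly through $\{0,f^i(a),1\}$ until $1-f^N(a)<\varepsilon$; jump to $\{0,1\}$; jump to $\{0,a^{2^M},1\}$ with $a^{2^M}<\varepsilon$; then iterate exactly for $M$ steps, arriving at $\{0,a,1\}$.
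This is precisely the merging and splitting you call ``the careful part''. It is a genuine obstruction, not a technicality. Chains give only what your Step~1 and the first half of Step~2 already give: each $a\in A$ is chain-reachable from one fixed point of $A$ and chain-reaches another. That is compatible with $a\notin\Fix(f)$.

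A correct argument must use that non-wandering produces genuine orbit segments, with no merging or splitting in between. That is, $B_k\to A$ and $F_n(f)^{m_k}(B_k)\to A$ with $|B_k|\leqslant n$. Write $B_k=\{b_k^1,\ldots,b_k^\ell\}$ and pass to a subsequence with $b_k^j\to\beta(j)\in A$ and $f^{m_k}(b_k^j)\to\gamma(j)\in A$. The pairs $(\beta(j),\gamma(j))$ form a directed graph on $A$ with at most $\ell\leqslant n$ edges. In it every vertex has an incoming and an outgoing edge, and every edge is a chain relation of $f$, so cycles pass only through fixed points. A non-fixed $a\in A$ then needs at least four edges: a cycle behind $a$, an edge into $a$, an edge out of $a$, and a disjoint cycle ahead of $a$. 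This settles $n\leqslant 3$.

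The same count shows that four points suffice. In the example above, $B_m=\{0,a^{2^m},a,1\}\to A$ while $F_4(f)^m(B_m)=\{0,a,a^{2^{-m}},1\}\to A$. Hence $\{0,a,1\}\in\NW(F_4(f))\setminus\Fix(F_4(f))$. The statement as written is therefore false for every $n\geqslant 4$, so no completion of your Step~2 can prove it in that generality.
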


The analysis of the non-wandering set of the induced map $F_n(f)$ and other maps is important because it provides insight into the long-term behaviour of a dynamical system. The structure of the non-wandering set of an induced map cannot be inferred directly from that of the underlying map, in general. The example we constructed in this paper shows that the assumption in Theorem \ref{CR} cannot be weakened to the corresponding assumption on the non-wandering set. Beside the interest in the non-wandering set on its own, there are further topics in dynamical systems that require the analysis of the non-wandering set. In particular, in \cite{KP}, authors Kati\'c and Peri\'c generalise a method for computing the polynomial entropy developed by Hauseux and Le Roux in~\cite{HL}, for maps which are continuous and whose non-wandering set is finite. The author used this method for computing the polynomial entropy of the induced map on the $n$-fold symmetric product in \cite{Dj}.

\section{Preliminaries}
Suppose that $X$ is a compact metric space, and $f:X\rightarrow X$ is continuous. For $x\in X$, $\omega$-{\it limit set} is defined as:
$$\omega_f(x):=\left\{y\in X\mid \exists \;\mbox {a strictly increasing sequence}\;\mathbb{N}\ni n_k\to\infty\;\,f^{n_k}(x)\to y\right\},$$
and, for a reversible map $f$, $\alpha$-{\it limit set} is defined as:
$$\alpha_f(x):=\left\{y\in X\mid \exists \;\mbox {a strictly increasing sequence}\;\mathbb{N}\ni n_k\to\infty\;\,f^{-n_k}(x)\to y\right\}.$$
If $X$ is compact, the set $\omega_f(x)$ is nonempty, closed and $f$-invariant. The same is true for $\alpha_f(x)$, when $f$ is reversible.

A point $x\in X$ is \textit{non-wandering} if for every neighborhood $U$ of $x$ there is $n\geqslant1$ such that $f^n(U)\cap U$ is nonempty. We denote the set of all non-wandering points by $\NW(f)$. The set $\NW(f)$ is closed and $f$-invariant. Also, we denote the set of all fixed points by $\mathrm{Fix}(f)$.

In order to define another important invariant subset, let us begin by defining an $\varepsilon$-chain. We say that a tuple $(y_1,\ldots,y_k)$ is an $\varepsilon$-chain if $d(f(y_i),y_{i+1})<\varepsilon$, $1\leqslant i\leqslant k-1$. A point $x\in X$ is $\varepsilon$-pseudo-periodic if there exists an $\varepsilon$-chain that starts and ends at $x$. Finally, a point $x\in X$ is \textit{chain recurrent} if $x$ is $\varepsilon$-pseudo-periodic for all $\varepsilon>0$. We denote the set of all chain recurrent points by $\CR(f)$. The set $\CR(f)$ is a closed and $f$-invariant set which contains $\NW(f)$. 

Let $A$ be a finite set, called {\it alphabet}, whose elements are called {\it symbols}. Denote by $A^\mathbb{Z}$ the set of all two-sided sequences $x=\{x_n\}_{n=-\infty}^\infty$ equipped with the Tychonoff product topology. The space $A^\mathbb{Z}$ is metrizable by the metric:
$$d(x,y):=2^{-m},$$ where
$$m=\min\{|n|\mid x_n\neq y_n\}.$$ With this metric, $A^\mathbb{Z}$ is a compact, totally disconnected metric space without isolated points. The {\it shift map} is defined as:
$$\sigma:A^\mathbb{Z}\to A^\mathbb{Z},\quad\{\sigma(x)\}_n=x_{n+1}$$ and it is a homeomorphism. Any closed subset $\Sigma\subseteq A^\mathbb{Z}$ with $\sigma(\Sigma)=\Sigma$ is said to be a {\it two-sided subshift} or a {\it subshift}, for short.

\subsection{Hyperspaces and induced maps}

For a compact metric space $(X,d)$, the hyperspace $2^X$ is the set of all nonempty closed subsets of $X$. The topology on $2^X$ (namely, the Vietoris topology) is induced by the Hausdorff metric
$$d_H(A,B):=\inf\{\varepsilon>0\mid A\subset U_\varepsilon(B),\;B\subset U_\varepsilon(A)\},$$
where
\begin{equation}\label{eq:neighb}
U_\varepsilon(A):=\{x\in X\mid d(x,A)<\varepsilon\}.
\end{equation}
We consider a closed subspace $F_n(X)$ of $2^X$, $n\geqslant1$, consisting of all finite nonempty subsets of cardinality at most $n$, with the induced metric. The set $F_n(X)$ is called the \textit{$n$-fold symmetric product of $X$}. Both spaces $2^X$ and $F_n(X)$ are compact metric spaces with respect to the Hausdorff metric.

If $f:X\to X$ is continuous, then it induces continuous maps
 $2^f:2^X\to 2^X$ and $F_n(f):F_n(X)\to F_n(X)$, $F_n(f)=2^f\mid_{F_n(X)}$ by:
 $$2^f(A):=\{f(x)\mid x\in A\}.$$
 If $f$ is a homeomorphism, then this is also true for both $2^f$ and $F_n(f)$. Because of the following commutative diagram, it is clear that $(F_n(f),F_n(X))$ is a factor of $(f^{\times n},X^{\times n})$.
 
 \[ \begin{tikzcd}
X^{\times n} \arrow{r}{f^{\times n}} \arrow[swap]{d}{\pi} & X^{\times n} \arrow{d}{\pi} \\%
F_n(X)\arrow{r}{F_n(f)}& F_n(X)
\end{tikzcd}
\]
Here, $X^{\times n}=\underbrace{X\times\dots\times X}_{n}$, $f^{\times n}(x_1,\ldots,x_n)=(f(x_1),\ldots,f(x_n))$ and\\ $\pi(x_1,\ldots,x_n)=\{x_1,\ldots,x_n\},$ which is a continuous surjective map. The metric $\mathbf{d}$ on the product space $X^{\times n}$ is given with:
$$\mathbf{d}((x_1,\ldots,x_n),(y_1,\ldots,y_n))=\max_{1\leqslant i\leqslant n}\{{d(x_i,y_i)}\}.$$ 
Note that all the $p$-metrics, $1\leqslant p\leqslant\infty$ are equivalent on $X^{\times n}$.
\begin{exam}
If $X=[0,1]$, then $F_2(X)$ can be identified with the triangle whose vertices are $(0,0),(0,1),(1,1)$ and $F_1(X)$ is homeomorphic to the line segment joining $(0,0)$ and $(1,1)$. If $X=\mathbb{S}^1$, then $F_2(X)$ is homeomorphic to the M$\ddot{o}$bius band and $F_1(X)$ is homeomorphic to the manifold boundary of the M$\ddot{o}$bius band.
\end{exam}

\section{Main results}

Firstly, let us prove a simple proposition which inspired the construction of our main example.

\begin{prop}\label{switch}
Let $f:X\to X$ be a continuous map on a compact metric space. Suppose $x\neq y$ and there exist $x_n,y_n\in X$ and $m_n\to\infty$ such that $x_n\to x$, $y_n\to y$ and $f^{m_n}(x_n)\to y$, $f^{m_n}(y_n)\to x$. Then $\{x,y\}\in\NW(F_2(f))$.
\end{prop}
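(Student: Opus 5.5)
We verify the definition of non-wandering directly in $F_2(X)$, taking the points $A_n:=\{x_n,y_n\}$ as witnesses. Fix a neighborhood $\mathcal U$ of $\{x,y\}$ in $F_2(X)$. By the definition of the Hausdorff metric we may shrink it to a ball $\{B\in F_2(X)\mid d_H(B,\{x,y\})<\varepsilon\}$, where $\varepsilon>0$ is small, say $\varepsilon<d(x,y)/2$. This choice keeps the $\varepsilon$-neighborhoods of $x$ and $y$ disjoint, although disjointness is not even essential for the estimate below.

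The key observation is that $d_H(\{a,b\},\{x,y\})\leqslant\max\{d(a,x),d(b,y)\}$ for any $a,b\in X$. Each of $a,b$ lies within that distance of $\{x,y\}$, and each of $x,y$ lies within that distance of $\{a,b\}$. The estimate is also symmetric under swapping the roles of $a$ and $b$, so $d_H(\{a,b\},\{x,y\})\leqslant\max\{d(a,y),d(b,x)\}$ as well. This symmetry is exactly what lets the "switched" convergence in the hypothesis be used. Note also that $\{a,b\}$ is an element of $F_2(X)$ even when $a=b$, so there is no issue if some $x_n=y_n$.

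Now choose $n$ large enough that all of the following hold:
\[
d(x_n,x)<\varepsilon,\qquad d(y_n,y)<\varepsilon,\qquad d(f^{m_n}(x_n),y)<\varepsilon,\qquad d(f^{m_n}(y_n),x)<\varepsilon,\qquad m_n\geqslant1 .
\]
This is possible because $m_n\to\infty$ and all four convergences hold. By the first inequality form, $A_n\in\mathcal U$. Since $F_n(f)$ is the restriction of $2^f$, we have $F_2(f)^{m_n}(A_n)=\{f^{m_n}(x_n),f^{m_n}(y_n)\}$, and by the swapped form of the inequality this set also lies in $\mathcal U$. Hence $F_2(f)^{m_n}(\mathcal U)\cap\mathcal U\neq\emptyset$ with $m_n\geqslant1$. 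Since $\mathcal U$ was an arbitrary neighborhood, $\{x,y\}\in\NW(F_2(f))$.

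There is no real obstacle in this argument. The only point needing care is the Hausdorff-distance estimate: it is invariant under relabeling the two points, and this is what makes the swap harmless.
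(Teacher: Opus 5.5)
Your proposal is correct and follows essentially the same route as the paper. Both use the witnesses $A_n=\{x_n,y_n\}$ and the bound $d_H(\{a,b\},\{x,y\})\leqslant\max\{d(a,x),d(b,y)\}$, applied once directly and once with the roles swapped, so that $A_n$ and $F_2(f)^{m_n}(A_n)$ both lie in any given neighborhood for large $n$; your explicit check that $m_n\geqslant 1$ is a detail the paper leaves implicit.
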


\begin{proof}
Let $A=\{x,y\}$ and $A_n=\{x_n,y_n\}$. Since $x_n\to x$ and $y_n\to y$, we have
\[
d_H(A_n,A)
\leqslant
\max\{d(x_n,x),d(y_n,y)\}\rightarrow 0.
\]
Hence $A_n\rightarrow A$ in $F_2(X)$.

Furthermore, $F_2(f)^{m_n}(A_n)=\{f^{m_n}(x_n),f^{m_n}(y_n)\}.$ Since
\[
f^{m_n}(x_n)\to y
\qquad\text{and}\qquad
f^{m_n}(y_n)\to x,
\]
we similarly obtain
\[
d_H\bigl(F_2(f)^{m_n}(A_n),A\bigr)
\leqslant
\max\bigl\{
d(f^{m_n}(x_n),y),
d(f^{m_n}(y_n),x)
\bigr\}
\rightarrow 0.
\]
We conclude that $F_2(f)^{m_n}(A_n)\rightarrow A.$

Let $U$ be an arbitrary neighborhood of $A$ in $F_2(X)$.
For all sufficiently large $n$, both
\[
A_n\in U
\qquad\text{and}\qquad
F_2(f)^{m_n}(A_n)\in U.
\]
Since $A_n\in U$, it follows that $F_2(f)^{m_n}(A_n)\in F_2(f)^{m_n}(U)\cap U$.
Consequently,
\[
F_2(f)^{m_n}(U)\cap U\neq\emptyset.
\]
Since $U$ was arbitrary, $A$ is a non-wandering point of $F_2(f)$. Therefore $\{x,y\}\in\NW(F_2(f))$.
\end{proof}

Let $\Sigma=\{0,1\}^{\mathbb Z}$ be the full two-sided shift endowed with the product topology, and let $\sigma:\Sigma\to\Sigma$ be the shift map. We will now define a closed $\sigma$-invariant set $X\subset \{0,1\}^{\mathbb{Z}}$. Elements of $X$ are two-sided sequences $(x_n)_{n\in\mathbb{Z}}$ having at most one occurrence of $01$ and at most one occurrence of $10$. The elements of $X$ can be described explicitly. They are of the following five types.

\begin{enumerate}
    \item The two constant sequences $\mathbf{0}=0^{\mathbb Z}$ and $\mathbf{1}=1^{\mathbb Z}$.

    \item Sequences with exactly one block of type $01$. For
    $r\in\mathbb Z$, define
    \[
    x^{(r)}_i=
    \begin{cases}
        0, & i<r,\\
        1, & i\geq r.
    \end{cases}
    \]
    Thus
    \[
    x^{(r)}
    =
    \ldots 000\,\big|\,111\ldots,
    \]
    where the first $1$ occurs at coordinate $r$.

    \item Sequences with exactly one block of type $10$. For
    $r\in\mathbb Z$, define
    \[
    y^{(r)}_i=
    \begin{cases}
        1, & i<r,\\
        0, & i\geq r.
    \end{cases}
    \]
    Thus
    \[
    y^{(r)}
    =
    \ldots 111\,\big|\,000\ldots,
    \]
    where the first $0$ occurs at coordinate $r$.

    \item Sequences containing a finite block of $1$'s surrounded by
    $0$'s. For $r\in\mathbb Z$ and $m\in\mathbb N$, define
    \[
    z_{r,m}(i)=
    \begin{cases}
        1, & r\leq i<r+m,\\
        0, & \text{otherwise}.
    \end{cases}
    \]
    Such sequences have exactly one $01$ block and exactly one
    $10$ block.

    \item Sequences containing a finite block of $0$'s surrounded by
    $1$'s. For $r\in\mathbb Z$ and $m\in\mathbb N$, define
    \[
    w_{r,m}(i)=
    \begin{cases}
        0, & r\leq i<r+m,\\
        1, & \text{otherwise}.
    \end{cases}
    \]
    Again, such sequences have exactly one $10$ block and exactly
    one $01$ block.
\end{enumerate}

These are all the elements of $X$, so
\[
X=
\{\mathbf{0},\mathbf{1}\}
\cup
\{x^{(r)}:r\in\mathbb Z\}
\cup
\{y^{(r)}:r\in\mathbb Z\}
\cup
\{z_{r,m}:r\in\mathbb Z,\ m\in\mathbb N\}
\cup
\{w_{r,m}:r\in\mathbb Z,\ m\in\mathbb N\}.
\]

\begin{thrm}
$(X,\sigma)$ is a closed invariant set in $\{0,1\}^{\mathbb{Z}}$ such that $\NW(\sigma|_X)=\Fix(\sigma|_X)$, but $\NW(F_N(\sigma|_X))\neq \Fix(F_N(\sigma|_X))$.
\end{thrm}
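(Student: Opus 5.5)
The plan has three parts: check that $X$ is a subshift, compute $\NW(\sigma|_X)$ directly from the uniqueness of transitions, and then use Proposition \ref{switch} to produce a non-fixed non-wandering point of $F_2(\sigma|_X)$, which also lies in $F_N(X)$ for all $N\geqslant 2$.

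\emph{Step 1 ($X$ is closed and invariant).} Shift invariance is immediate: $\sigma$ only moves coordinates, so it preserves the number of occurrences of the words $01$ and $10$. For closedness I will use that having an occurrence of $01$ at positions $(i,i+1)$ depends only on two coordinates. Suppose $u^{(k)}\in X$ and $u^{(k)}\to u$, and suppose $u$ had two occurrences of $01$, at $i\neq j$. Then for large $k$ the sequence $u^{(k)}$ agrees with $u$ on a window containing $i,i+1,j,j+1$, so it too has two occurrences of $01$, which is a contradiction. The same argument applies to $10$. Hence $X$ is closed, and the list of five types confirms that $X$ is exactly the set described.

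\emph{Step 2 ($\NW(\sigma|_X)=\Fix(\sigma|_X)=\{\mathbf 0,\mathbf 1\}$).} The constant sequences are fixed, hence non-wandering. Every other point $u\in X$ contains at least one occurrence of $01$ or of $10$; say (by symmetry) $01$ occurs at $(i,i+1)$. Take the cylinder $U$ of points of $X$ agreeing with $u$ on $[-k,k]$, with $k>|i|+1$.
\begin{itemize}
\item Every $v\in U$ has its unique $01$ at $(i,i+1)$.
\item For $n\geqslant 1$, the point $\sigma^n(v)$ has its unique $01$ at $(i-n,i-n+1)$.
\item Therefore $\sigma^n(v)\notin U$.
\end{itemize}
So $\sigma^n(U)\cap U=\emptyset$ for all $n\geqslant1$, and $u$ is wandering. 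The only non-fixed points are those with a transition, so this settles $\NW=\Fix$.

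\emph{Step 3 (a non-fixed non-wandering point in $F_2$).} I will apply Proposition \ref{switch} with $x=x^{(0)}$ and $y=y^{(0)}$. These are distinct, and $A=\{x^{(0)},y^{(0)}\}$ is not fixed by $F_2(\sigma)$, since $\sigma(x^{(0)})=x^{(-1)}\notin A$. The choices are:
\begin{itemize}
\item $x_n=z_{0,2n}$ (ones on $[0,2n)$), which agrees with $x^{(0)}$ on $[-2n,2n)$, so $x_n\to x^{(0)}$;
\item $y_n=w_{0,2n}$ (zeros on $[0,2n)$), which similarly satisfies $y_n\to y^{(0)}$;
\item $m_n=2n$.
\end{itemize}
After shifting, $\sigma^{2n}(z_{0,2n})=z_{-2n,2n}$ has ones exactly on $[-2n,0)$, so it agrees with $y^{(0)}$ on $[-2n,2n)$ and tends to $y^{(0)}$. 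Symmetrically, $\sigma^{2n}(w_{0,2n})=w_{-2n,2n}\to x^{(0)}$.

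Proposition \ref{switch} then gives $A\in\NW(F_2(\sigma|_X))\setminus\Fix(F_2(\sigma|_X))$. For any $N\geqslant2$, the sets $A_n=\{x_n,y_n\}$ and $A$ lie in $F_N(X)$, and $d_H$ is the same metric there. Hence the proof of Proposition \ref{switch} works verbatim in $F_N(X)$, giving $A\in\NW(F_N(\sigma|_X))$. Replacing $0$ by any $r\in\mathbb Z$ yields infinitely many such points $\{x^{(r)},y^{(r)}\}$.

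The only delicate point is choosing the approximating sequences so that one common time $m_n$ carries $x_n$ near $y$ and $y_n$ near $x$ simultaneously. Taking blocks of length exactly $m_n=2n$ starting at $0$ achieves this, so I expect no real obstacle. Note that the statement must be read for $N\geqslant2$; for $N=1$ the induced map is conjugate to $\sigma|_X$, and Step 2 shows its non-wandering set is just its fixed points.
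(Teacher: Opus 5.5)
Your proof is correct and follows essentially the same route as the paper. Closedness comes from the locality of the words $01$ and $10$, a point with a transition is wandering because that transition is unique, and Proposition~\ref{switch} is applied to $\{x^{(0)},y^{(0)}\}$ with finite blocks of $1$'s and $0$'s as approximants; the paper uses blocks of length $n$ with $m_n=n$, so your doubling to $2n$ is harmless, and your remark that the statement must be read for $N\geqslant 2$ correctly makes explicit what the paper only notes at the end of its proof.
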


\begin{proof}
By construction it is clear that $\sigma(X)=X$. Let us prove that $X$ is closed, or equivalently that $X^{c}$ is open. For an arbitrary $x\in X^{c}$ we have that there are at least two occurrences of $01$ or $10$. Let $x_ix_{i+1}=01$ and $x_jx_{j+1}=01$, for some $i,j\in\mathbb{Z}$. Then the cylinder neighbourhood $C=\{u\in X\mid u_iu_{i+1}=01,u_ju_{j+1}=01\}\subset X^{c}$, so $X^{c}$ is indeed open. 

Let $f=\sigma|_X$. Then $f$ is a homeomorphism on the compact space $X$. It is clear that $\Fix(f)=\{\mathbf{0},\mathbf{1}\}$. We will now show that $\NW(f)=\{\mathbf{0},\mathbf{1}\}$. Let $x\neq \mathbf{0},\mathbf{1}$. Then $x$ contains $01$ or $10$ (or both), so the cylinder $U=\{u\in X\mid u_iu_{i+1}=01\}$ in $X$ is a wandering neighbourhood. Indeed, if $f^k(U)\cap U\neq\emptyset$, then the element in the intersection would have two occurrences of $01$, which is impossible.

For every $n\geq 1$, define $z_n,w_n\in X$ by
\[
(z_n)_i=
\begin{cases}
1, & 0\leq i<n,\\
0, & \text{otherwise},
\end{cases}
\qquad
(w_n)_i=
\begin{cases}
0, & 0\leq i<n,\\
1, & \text{otherwise}.
\end{cases}
\]

Define $x=\ldots 000\mid 111\ldots$ and $y=\ldots 111\mid 000\ldots$, or, more precisely:
\[
x_i=
\begin{cases}
0, & i<0,\\
1, & i\geq 0,
\end{cases}
\qquad
y_i=
\begin{cases}
1, & i<0,\\
0, & i\geq 0.
\end{cases}
\]
Then $x$ and $y$ are wandering points for $f$. Let us show that $A=\{x,y\}\in F_2(X)$ is a non-wandering point of $F_2(f)$, which is obviously not a fixed point of $F_2(f)$. Define $z_n=\ldots 000\mid
\underbrace{11\ldots 1}_{n\text{ times}}
\mid 000\ldots,
$
and
$
w_n=\ldots 111\mid
\underbrace{00\ldots 0}_{n\text{ times}}
\mid 111\ldots.
$ Then $z_n\to x$ and $w_n\to y$, $n\to\infty$ and $f^n(z_n)\to y$, $f^n(w_n)\to x$, $n\to\infty$. We can now apply Proposition \ref{switch} and conclude that $A\in\NW(F_2(f))$, but as we mentioned $A\notin\Fix(F_2(f))$. Note that $A\in \NW(F_N(f))$, for all $N\geqslant2$, which can be shown analogously as for $N=2$.
\end{proof}

\begin{cor}\label{posledica}
There exist a compact metric space $Y$ and a homeomorphism $g:Y\to Y$ such that 
\[ \NW(g)=\Fix(g)=\{p\}, \]
while \[ \NW(F_2(g))\neq\Fix(F_2(g)). \] 
\end{cor}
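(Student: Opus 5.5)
The plan is to collapse the two fixed points $\mathbf{0}$ and $\mathbf{1}$ of the example in the preceding theorem into a single point. Let $Y=X/\{\mathbf{0}\sim\mathbf{1}\}$ be the quotient space and $q:X\to Y$ the quotient map. Write $p=q(\mathbf{0})=q(\mathbf{1})$. Since $f=\sigma|_X$ permutes the class $\{\mathbf{0},\mathbf{1}\}$ (it fixes both points), $f$ descends to a bijection $g:Y\to Y$ with $g\circ q=q\circ f$.

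First I will check the topology. A quotient of a compact metric space by identifying a closed set (here a finite set) to a point is compact Hausdorff. It is metrizable, for instance by an explicit metric such as $\rho(a,b)=\min\{d(a,b),\,d(a,\mathbf{0})+d(b,\mathbf{1}),\,d(a,\mathbf{1})+d(b,\mathbf{0})\}$ adjusted in the standard way; alternatively, one can cite metrizability of compact Hausdorff second countable spaces. The map $g$ is continuous because of the universal property of quotients. Being a continuous bijection of a compact Hausdorff space, $g$ is a homeomorphism.

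Next I determine the fixed and non-wandering points. $\Fix(g)=\{p\}$, since the other points of $Y$ correspond bijectively to points of $X\setminus\{\mathbf{0},\mathbf{1}\}$, none of which is fixed. Clearly $p\in\NW(g)$. For $u\neq p$, take $u=q(x')$ with $x'\notin\{\mathbf{0},\mathbf{1}\}$ and the wandering cylinder $U$ used in the proof of the theorem. I will shrink $U$ to a cylinder avoiding $\mathbf{0}$ and $\mathbf{1}$; this is automatic, since a cylinder prescribing $01$ or $10$ excludes both constant sequences. Then $q(U)$ is open in $Y$, because $q^{-1}(q(U))=U$. Since $q$ is injective off $\{\mathbf{0},\mathbf{1}\}$, the condition $g^k(q(U))\cap q(U)\neq\emptyset$ forces $f^k(U)\cap U\neq\emptyset$, which is a contradiction. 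Hence $\NW(g)=\{p\}=\Fix(g)$.

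For the hyperspace I take $B=\{q(x),q(y)\}$ with $x,y$ as in the theorem. These are distinct points of $Y$ different from $p$. Because $q$ is continuous, the convergences $z_n\to x$, $w_n\to y$, $f^n(z_n)\to y$ and $f^n(w_n)\to x$ transfer to $q(z_n)\to q(x)$, $q(w_n)\to q(y)$, $g^n(q(z_n))\to q(y)$ and $g^n(q(w_n))\to q(x)$. Proposition \ref{switch} then gives $B\in\NW(F_2(g))$. Finally, $F_2(g)(B)=\{q(\sigma x),q(\sigma y)\}\neq B$ because $\sigma x\neq x,y$, so $B\notin\Fix(F_2(g))$. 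The only mildly delicate step is the metrizability of $Y$ together with the openness of $q(U)$; both are routine.
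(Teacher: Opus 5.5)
Your proposal is correct and follows essentially the same route as the paper: collapse $\{\mathbf{0},\mathbf{1}\}$ to a point $p$, transfer wandering neighborhoods using injectivity of $q$ off that class, and push the switching sequences through $q$ to apply Proposition \ref{switch}. Your justification that $q(U)$ is open, namely $q^{-1}(q(U))=U$, is the correct reason; the paper's blanket claim that every projection is an open map is not true in general. You also prove, rather than assert, that $g$ is a homeomorphism.
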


\begin{proof} Let $(X,f)$ be the dynamical system constructed and used in the previous theorem, and let 
\[ \NW(f)=\Fix(f)=\{\mathbf{0},\mathbf{1}\}. \] 
Define an equivalence relation $\sim$ on $X$ by identifying $\mathbf{0}$ and $\mathbf{1}$ and leaving every other point in a singleton equivalence class. Let 
\[ q:X\to Y=X/{\sim} \] 
be the quotient map and denote \[ p=q(\mathbf{0})=q(\mathbf{1}). \] Since $X$ is a compact metric space and the only nontrivial equivalence class is the finite set $\{\mathbf{0},\mathbf{1}\}$, the quotient space $Y$ is compact and metrizable. Since $\mathbf{0}$ and $\mathbf{1}$ are fixed by $f$, the equivalence relation is $f$-invariant. Hence, $f$ induces a homeomorphism 
\[ g:Y\to Y, \qquad g(q(x))=q(f(x)). \] 
We first show that $\NW(g)=\{p\}.$ Clearly $p$ is fixed, and hence $p\in\NW(g)$. Let $y\in Y\setminus\{p\}$ and let $x$ be its (unique) preimage under $q$. By the construction of $(X,f)$, the point $x$ is wandering. Choose a wandering open neighborhood $U$ of $x$ such that $U\cap\{\mathbf{0},\mathbf{1}\}=\emptyset$. Every projection is an open map, so $V=q(U)$ is an open neighborhood of $y$. Suppose that \[ g^m(V)\cap V\neq\emptyset \] for some $m\neq0$. Then there exist $u,v\in U$ such that $ q(f^m(u))=q(v).$ Since $\mathbf{0}$ and $\mathbf{1}$ are fixed and $u\notin\{\mathbf{0},\mathbf{1}\}$, we have $f^m(u)\notin\{\mathbf{0},\mathbf{1}\}$. The map $q$ is injective outside $\{\mathbf{0},\mathbf{1}\}$, and therefore $ f^m(u)=v.$ Thus \[ f^m(U)\cap U\neq\emptyset, \] contrary to the fact that $U$ is wandering. Hence $V$ is wandering. It follows that $ \NW(g)=\{p\}. $ We conclude that\[ \NW(g)=\Fix(g)=\{p\}. \] 
Let $x,y\in X$ and the sequences $(x_k)$, $(y_k)$ and $(m_k)$ be those from the previous example, so that \[ x_k\to x,\qquad y_k\to y, \] and \[ f^{m_k}(x_k)\to y,\qquad f^{m_k}(y_k)\to x. \] Put \[ \bar x=q(x),\qquad \bar y=q(y). \] Since $x$ and $y$ are distinct from $\mathbf{0}$ and $\mathbf{1}$, and from each other, we have $\bar x\neq\bar y$. By continuity of $q$, \[ q(x_k)\to\bar x,\qquad q(y_k)\to\bar y, \] while \[ g^{m_k}(q(x_k)) =q(f^{m_k}(x_k))\to\bar y \] and \[ g^{m_k}(q(y_k)) =q(f^{m_k}(y_k))\to\bar x. \] Therefore, by the Proposition \ref{switch}, \[ \{\bar x,\bar y\}\in\NW(F_2(g)). \] On the other hand, \[ F_2(g)(\{\bar x,\bar y\}) = \{q(f(x)),q(f(y))\} \neq \{\bar x,\bar y\}, \] since the points involved are distinct from $\mathbf{0},\mathbf{1}$ and the corresponding pair in the original example is not fixed. We conclude \[ \{\bar x,\bar y\}\in \NW(F_2(g))\setminus\operatorname{Fix}(F_2(g)). \] 
\end{proof}

\begin{rem}\label{remarka}
It is worth mentioning that the set $\{q(\sigma^r(x)),q(\sigma^s(y))\}$ is non-wandering in $F_2(Y)$ for all $r,s\in\mathbb{Z}$. Therefore, there are infinitely many non-wandering points in $F_2(Y)$, even though the map $g$ has only one non-wandering (fixed) point. Additionally, there is only one more family of non-wandering points in $F_2(Y)$. Using similar techniques as before, we can prove the following proposition.

\begin{prop}
Let $g:Y\to Y$ be the homeomorphism obtained in the previous
construction by identifying the two fixed points
$\mathbf{0}$ and $\mathbf{1}$ of $(X,f)$, and let $q:X\to Y$ be the quotient map. Denote the unique fixed point of $g$ by $p=q(\mathbf{0})=q(\mathbf{1})$.

Let
\[
x_i=
\begin{cases}
0,& i<0,\\
1,& i\geqslant 0,
\end{cases}
\qquad
y_i=
\begin{cases}
1,& i<0,\\
0,& i\geqslant 0,
\end{cases}
\]
and for $r,s\in\mathbb Z$, put
\[
x_r=q(\sigma^r x),
\qquad
y_s=q(\sigma^s y).
\]
Then
\[
\NW(F_2(g))
=
\bigl\{\{p,u\}|u\in Y\bigr\}
\cup
\bigl\{\{x_r,y_s\}|r,s\in\mathbb Z\bigr\}.
\]
\end{prop}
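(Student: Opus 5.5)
The plan is to prove the two inclusions separately. Non-wandering points will come from explicit returning sequences, as in Proposition \ref{switch}. Wandering pairs will be excluded by a ``marker'' argument: cylinder neighbourhoods pin down the positions of the blocks $01$ and $10$, exactly as in the proof that $\NW(f)=\{\mathbf{0},\mathbf{1}\}$. Throughout I use that every forward and backward orbit of $f$ accumulates only on $\{\mathbf{0},\mathbf{1}\}$, because $\omega_f$ and $\alpha_f$ limit sets lie in $\NW(f)$. Hence $g^{m}(u)\to p$ and $g^{-m}(u)\to p$ as $m\to\infty$ for every $u\in Y$.

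\emph{Inclusion $\supseteq$.} For $\{p,u\}$ with $u\in Y$ (including $u=p$), put $A_m=\{g^{-m}(u),u\}$. Then $A_m\to\{p,u\}$ and $F_2(g)^m(A_m)=\{u,g^m(u)\}\to\{u,p\}$. The argument in the proof of Proposition \ref{switch} (which only uses $A_m\to A$ and $F_2(g)^{m}(A_m)\to A$) then gives $\{p,u\}\in\NW(F_2(g))$. For $\{x_r,y_s\}$ I apply Proposition \ref{switch} with shifted blocks. Let $a_n\in X$ be the block of $1$'s starting at $-r$ of length $L_n=n+r-s$, and let $b_n$ be the block of $0$'s starting at $-s$ of length $L'_n=n+s-r$, for $n$ large. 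Then $a_n\to\sigma^r x$ and $b_n\to\sigma^s y$. With $m_n=n$, $\sigma^{n}(a_n)$ is a block of $1$'s ending at coordinate $-s-1$ and starting at $-r-n\to-\infty$, so $\sigma^{n}(a_n)\to\sigma^s y$; symmetrically $\sigma^n(b_n)\to\sigma^r x$. Pushing these convergences through $q$, as in Corollary \ref{posledica}, finishes this inclusion.

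\emph{Inclusion $\subseteq$.} Take $\{a,b\}$ with $p\notin\{a,b\}$, and identify $a,b$ with their preimages in $X$. Each such point carries a nonempty set of markers, namely the positions of its $01$ and of its $10$. A small cylinder neighbourhood $U_a$ forces every point of $U_a$ to carry the markers of $a$ at the same positions, and likewise $U_b$ for $b$. Since each point has at most one $01$ and one $10$, these forced markers are then all the markers of that type. Singletons $\{a\}$ are wandering by the argument already used for $\NW(f)$.

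Now suppose $a'\in U_a$, $b'\in U_b$ and $\{\sigma^k a',\sigma^k b'\}\in\langle U_a,U_b\rangle$ with $k\geq1$. Since $\sigma^k a'\in U_a$ is impossible, as it would give two $01$'s or two $10$'s, we must have $\sigma^k a'\in U_b$ and $\sigma^k b'\in U_a$. For each marker type present in both $a$ and $b$, say at positions $i$ in $a$ and $j$ in $b$, uniqueness gives $j=i-k$ and $i=j-k$, hence $k=0$, a contradiction. Therefore $a$ and $b$ share no marker type. This forces one of them to have only a $01$ and the other only a $10$, i.e.\ $\{a,b\}=\{\sigma^r x,\sigma^s y\}$.

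The main obstacle is this last bookkeeping step. One must state the neighbourhood-marker correspondence carefully, including under the quotient $q$. One must also handle the case $k<0$ and the possibility that the returning point lands near the same element of the pair rather than near the other one.
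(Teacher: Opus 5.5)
Your proof is correct and follows the route the paper has in mind when it says ``similar techniques as before'' (the paper itself omits the proof). Proposition \ref{switch} and its return-sequence argument produce both non-wandering families, and the cylinder/marker argument from the proof that $\NW(f)=\{\mathbf{0},\mathbf{1}\}$ shows that every other point of $F_2(Y)$ is wandering. The loose ends you flag are harmless. Negative $k$ never needs checking, since wandering only involves $n\geqslant 1$. The return near the same element is already excluded by your marker count. The quotient causes no trouble: $q$ restricts to a conjugacy of $X\setminus\{\mathbf{0},\mathbf{1}\}$ onto the open set $Y\setminus\{p\}$, and your cylinders avoid $\mathbf{0},\mathbf{1}$ and can be chosen with $U_a\cap U_b=\emptyset$.
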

\end{rem}

\begin{rem}
We mentioned the chain recurrent set of the base map, and the connection to the non-wandering set in the $n-$fold symmetric product hyperspace in Theorem \ref{CR}. Interestingly, the chain recurrent set of the constructed map $g$ in Corollary \ref{posledica} is the entire space $Y$. Namely, take an arbitrary $y\in Y$ and $\varepsilon>0$. Since $g^n(y)\to p$, $n\to\infty$ and $g^{-n}(y)\to p$, $n\to\infty$, we can choose $N$ sufficiently large so that $d(g^{N+1}(y),p)<\varepsilon/2$ and $d(g^{-N}(y),p)<\varepsilon/2$. This ensures that $y,g(y),g^2(y),\ldots,g^{N}(y),g^{-N}(y),g^{-N+1}(y),\ldots,g^{-1}(y),y$ is an $\varepsilon$-chain starting and ending at $y$, proving that every point in $Y$ is chain recurrent.
\end{rem}

\textbf{Acknowledgements}:\\

The author is grateful to Jelena Kati\'c for many constructive discussions regarding non-wandering and chain recurrent sets on hyperspaces.

\end{document}